\newcommand{\iX}[1]{{\mathcal I}_{\mu_r}({\mathcal X})}
    \def\clE{{\mathcal E}}
\def\clX{{\mathcal X}}
\def\clY{{\mathcal Y}}
\def\bix{{\overline{I}_{\mu}(\clX)}}
\def\biy{{\overline{I}_{\mu}(\clY)}}
\def\bix1{{\overline{I}_{\mu_{r_1}}(\clX)}}
\def\biy2{{\overline{I}_{\mu_{r_2}}(\clY)}}
\newtheorem{definition}{Definition}[section]
\newenvironment{defi}{\begin{definition} \rm}{\end{definition}}
\newtheorem{prop}[definition]{Proposition}
\newtheorem{coro}[definition]{Corollary}
\newtheorem{theo}[definition]{Theorem}
\newtheorem{remark}[definition]{Remark}
\newtheorem{remarkdef}[definition]{Remark-Definition}
\newtheorem{con}[definition]{Conjecture}
\newtheorem{remarks}[definition]{Remarks}
\newtheorem{example}[definition]{Example}
\newtheorem{examples}[definition]{Examples}
\newtheorem{nothing}[definition]{$\!\!$}
\newtheorem{definition*}{Definition}[section]
\newenvironment{defi*}{\begin{definition*} \rm}{\end{definition*}}
\newtheorem{prop*}[definition*]{Proposition}
\newtheorem{lemm*}[definition*]{Lemma}
\newtheorem{coro*}[definition*]{Corollary}
\newtheorem{theo*}[definition*]{Theorem}
\newtheorem{remark*}[definition*]{Remark}
\newenvironment{rema*}{\begin{remark*} \rm}{\end{remark*}}
\newtheorem{remarks*}[definition*]{Remarks}
\newenvironment{remas*}{\begin{remarks*} \rm}{\end{remarks*}}
\newtheorem{example*}[definition*]{Example}
\newenvironment{exam*}{\begin{example*} \rm}{\end{example*}}
\newtheorem{examples*}[definition*]{Examples}\begin{large}                                             \end{large}
\newenvironment{exams*}{\begin{examples*} \rm}{\end{examples*}}
\title{ABELIAN FIBRATIONS AND SYZ MIRROR CONJECTURE}
\address{ Universitat, Departament de Matematiques, Edifici C, Facultad de Ciencies, 08193 Bellaterra, Barcelona}
  \address{Dipartimento di Matematica "Guido Castelnuovo"
Sapienza Universit\`a di Roma
P.le Aldo Moro, 5 - 00185 Roma}
\email{martinez@mat.uniroma1.it}
\author[Cristina Mart{\'\i}nez]{Cristina Mart{\'\i}nez}
\begin{document}
\sloppy
\date{\today}

 \subjclass[2000]{Primary: 14D05; Secondary:
14D20}
\keywords{Abelian varieties, Fourier-Mukai transform, Mirror Symmetry}

\begin{abstract}
SYZ mirror conjecture predicts that a Calabi Yau manifold $X$
consists of a family of tori which are dual to a family of special
Lagrangian tori on the mirror dual manifold $\hat{X}$. Here we
consider a fibration of polarized abelian varieties and we construct
a dual one. Moreover we prove that they are equivalent at the level of derived categories.
\vspace{0.5cm}

La conjecture de ``sym\'etrie miroir SYZ'' pr\'edit qu'une vari\'et\'e de Calabi-Yau $X$ consiste en une famille de tores qui sont duaux d'une famille de tores lagrangiennes sp\'eciaux dans la vari\'et\'e miroir duale $\hat X$. Nous consid\'erons ici une fibration de vari\'et\'es ab\'eliennes polaris\'ees et nous en construisons la duale. De plus, nous montrons qu'elles sont \'equivalentes au niveau des cat\'egories d\'eriv\'ees.
\vspace{0.5cm}
\flushright{{\it To the memory of Andrey Todorov}}

\end{abstract}
\maketitle

\section{Introduction}
A Calabi-Yau space has two kind of moduli spaces, the moduli space
of inequivalent complex structures and the moduli space of
symplectic structures. Mirror Symmetry should consist in the
identification of the moduli space of complex structures on an
$n-$dimensional Calabi-Yau manifold $X$ with the moduli space of
complexified K\"ahler structures on the mirror manifold
$\widehat{X}$. 

In the case $X$ is an elliptic curve, the modulus of complex
structures can be identified with the upper half-plane $\mathbb{H}$
by
$$\tau \rightarrow \frac{a\tau+b}{c\tau+d}, \ \ \  A=\left(\begin{array}{ll} a &  b \\
c & d \end{array}\right) \in PSL(2,\mathbb{Z}).$$ We call it
$X_{\tau}$, where $\tau$ is the Teichm\"uller parameter. The second
modulus is the k\"ahler class $[w]\in H^{2}(X,\mathbb{C})$
parametrised by $t\in \mathbb{H}$, as $\int_{X}w=2\pi i t$.

Here we will study Calabi-Yau spaces that are fibred over the same
base $B$ by polarized abelian varieties. If $X/B$ is an abelian
fibration with a global polarization, and we call $X^{\vee}/B$ its
dual fibration, our main result is:

\begin{theo} \label{T1}The derived categories of both fibrations $X/B$ and $X^{\vee}/B$ are equivalent.

\end{theo}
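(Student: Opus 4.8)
The plan is to produce a \emph{relative Fourier--Mukai transform} along $B$ and to show it realizes the asserted equivalence, following Mukai's classical argument for a single abelian variety. Recall that if $A$ is an abelian variety of dimension $g$ over a field, $\widehat A=\Pic^{0}(A)$ its dual, and $\mathcal P$ the rigidified Poincar\'e line bundle on $A\times\widehat A$, then the integral functor $\Phi_{\mathcal P}(E)=Rp_{2*}\bigl(p_{1}^{*}E\otimes\mathcal P\bigr)$ is an equivalence $D^{b}(A)\simto D^{b}(\widehat A)$ with quasi-inverse $(-1_{A})^{*}\circ\Phi_{\mathcal P^{\vee}}[g]$. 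For the fibration $X/B$, the global polarization guarantees that $X^{\vee}/B$ exists as a projective abelian fibration (the relative Picard functor is representable, the fibres being projective and geometrically integral and $X/B$ carrying the zero-section, and $\Pic^{0}$ is the dual abelian scheme), and that the \emph{relative Poincar\'e sheaf} $\mathcal P$ on the fibre product $X\times_{B}X^{\vee}$, rigidified along the zero-section, is a line bundle, flat over $B$, restricting on every fibre to the usual Poincar\'e bundle.

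First I would set up the two functors
$$\Phi\colon D^{b}(X)\longto D^{b}(X^{\vee}),\qquad \Phi(-)=Rp_{2*}\bigl(p_{1}^{*}(-)\otimes\mathcal P\bigr),$$
and symmetrically $\Psi\colon D^{b}(X^{\vee})\longto D^{b}(X)$ with kernel $\mathcal P^{\vee}$, where $p_{1},p_{2}$ are the projections from $X\times_{B}X^{\vee}$. Since $\mathcal P$ is a line bundle, $p_{1}$ is flat, and $p_{2}$ is flat (it is a base change of $X/B$) and proper, both $\Phi$ and $\Psi$ send bounded complexes with coherent cohomology to such, so they are well-defined $B$-linear functors on the bounded derived categories, compatible with arbitrary base change $B'\to B$.

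The core of the proof is the relative convolution identity
$$\Psi\circ\Phi\;\simeq\;(-1_{X})^{*}[-g],\qquad \Phi\circ\Psi\;\simeq\;(-1_{X^{\vee}})^{*}[-g],$$
where $g$ is the relative dimension of $X/B$. I would derive it in the standard way: the composite $\Psi\circ\Phi$ is an integral functor with kernel on $X\times_{B}X$, namely the convolution of $\mathcal P$ with $\mathcal P^{\vee}$, computed by flat base change and the projection formula as a direct image along the middle copy of $X^{\vee}$. Restricted to a fibre this push-forward is exactly Mukai's computation: $R\Gamma\bigl(\widehat A,\mathcal P_{a}\otimes\mathcal P^{\vee}_{a'}\bigr)$ vanishes for $a\neq a'$ and equals $k[-g]$ for $a=a'$; the fibre cohomology thus has locally constant rank, so by cohomology-and-base-change the formation of the relevant direct images commutes with all base change, and the fibrewise statement globalizes to an identification of the convolution kernel with the structure sheaf of the anti-diagonal $\Gamma_{-1_{X}}\subset X\times_{B}X$ shifted by $[-g]$. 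Granting this identity, $\Psi\circ\Phi$ and $\Phi\circ\Psi$ are autoequivalences, whence $\Phi$ is fully faithful and essentially surjective, i.e.\ an equivalence $D^{b}(X)\simeq D^{b}(X^{\vee})$, proving Theorem~\ref{T1}.

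The step I expect to be the main obstacle is precisely the globalization of Mukai's fibrewise vanishing over all of $B$: one needs the cohomology-and-base-change hypotheses to hold uniformly, which amounts to the family behaving like a genuine abelian scheme (smooth, proper, with a section), so that the semicontinuous fibre dimensions are in fact constant. The polarization hypothesis is what must be leveraged here, since it supplies the projectivity making $X^{\vee}/B$ and $\mathcal P$ exist and it controls the fibre cohomology; any degeneration of fibres, absence of a global section (which would force one to work with $\mathcal P$ as a twisted sheaf), or pathology of $B$ is the delicate point and is where the hypothesis has to be invoked with care, or else the comparison restricted to the open locus over which $X/B$ is an abelian scheme.
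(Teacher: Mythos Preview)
Your argument is the classical relative Mukai proof and is correct under the hypotheses you explicitly build in: $X/B$ is an abelian scheme, in particular smooth with a global zero-section, so that $X^{\vee}=\Pic^{0}_{X/B}$ and a rigidified Poincar\'e line bundle $\mathcal P$ exist. The paper, however, takes a genuinely different route, precisely because it does \emph{not} assume a section and allows singular fibres. Its strategy is a descent argument: starting only from a multisection, it passes to a finite \'etale Galois cover $\tau\colon B'\to B$ with group $G$ so that the pulled-back family $X'=X\times_{B}B'\to B'$ acquires a section; on $X'/B'$ one then has the relative Poincar\'e sheaf and (essentially by the argument you wrote, or rather by the moduli/stack description of the dual fibration) an equivalence $D^{b}(X'/B')\simeq D^{b}(\widehat{X}'/B')$; finally one identifies $D^{b}(X/B)$ with the $G$-invariants $D^{b}(X'/B')^{G}$ and similarly on the dual side, and descends the equivalence. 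What your approach buys is transparency and an explicit quasi-inverse via the convolution identity $\Psi\circ\Phi\simeq(-1_{X})^{*}[-g]$; what the paper's approach buys is exactly the point you flag as the main obstacle in your last paragraph, namely the absence of a section and the presence of degenerate fibres, which it handles by \'etale base change and Galois descent rather than by twisted sheaves or by restricting to the smooth locus. Your proposal would serve as the ``after base change'' step in the paper's scheme, but by itself it does not address the setting the paper is working in.
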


\begin{coro} 
There is an equivalence $\phi_{b}:D^{b}(X_{b})\rightarrow
D^{b}(\widehat{X}_{b})$ for every closed point $b\in B$.
\end{coro}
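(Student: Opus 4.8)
The plan is to obtain $\phi_b$ simply as the fibrewise incarnation of the relative equivalence furnished by Theorem \ref{T1}, so that no new equivalence has to be built from scratch. First I would recall that the equivalence $\Phi\colon D^b(X)\to D^b(X^\vee)$ of Theorem \ref{T1} is the relative integral transform whose kernel is the relative Poincar\'e bundle $\clP$ on the fibre product $X\times_B X^\vee$. Fix a closed point $b\in B$, write $\iota_b\colon X_b\hookrightarrow X$ and $\hat\iota_b\colon\widehat{X}_b\hookrightarrow X^\vee$ for the inclusions of the fibres over $b$, and let $\clP_b$ denote the restriction of $\clP$ to $X_b\times\widehat{X}_b$. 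I then define $\phi_b\colon D^b(X_b)\to D^b(\widehat{X}_b)$ to be the integral transform with kernel $\clP_b$.

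The second step is a base-change compatibility identifying $\phi_b$ with the restriction of $\Phi$. Because $X/B$ and $X^\vee/B$ are flat and proper and $\clP$ is a line bundle, hence flat over $B$, the inclusion $\{b\}\hookrightarrow B$ is Tor-independent of the two families; the fibre of $X\times_B X^\vee$ over $b$ is exactly $X_b\times\widehat{X}_b$, and the restriction of $\clP$ to it is $\clP_b$. Flat base change together with the projection formula then gives a natural isomorphism
\[
L\hat\iota_b^{\,*}\circ\Phi \;\simeq\; \phi_b\circ L\iota_b^{\,*},
\]
so $\phi_b$ genuinely reads off the action of the relative equivalence $\Phi$ on the fibre over $b$.

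For the third step I would check that this fibre functor is itself an equivalence, which is now a statement about a single abelian variety. By construction of the dual fibration $X^\vee/B$, the fibre $\widehat{X}_b$ is canonically the dual abelian variety $X_b^\vee$ of the polarized abelian variety $X_b$, and under this identification $\clP_b$ is the classical Poincar\'e line bundle on $X_b\times X_b^\vee$. By Mukai's theorem, the integral transform attached to the Poincar\'e bundle on the product of an abelian variety with its dual is an equivalence of bounded derived categories; hence $\phi_b$ is an equivalence, for every closed point $b\in B$, as claimed.

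The main obstacle is precisely the base-change step: one must verify that the formation of the kernel commutes with restriction to the fibre, i.e. that $L\iota_b^{\,*}\clP$ carries no higher Tor and coincides with $\clP_b$, and that the derived and naive restrictions of the transform agree. This is where flatness of $\clP$ over $B$ and of the fibrations $X/B$, $X^\vee/B$ enters, guaranteeing the displayed isomorphism holds without correction terms. Once this is in place, the remaining content is either immediate from Theorem \ref{T1} or a citation of Mukai's fibrewise equivalence, so I do not expect any further difficulty.
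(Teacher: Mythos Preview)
Your argument has a real gap at the third step. You conclude that $\phi_b$ is an equivalence by identifying $\widehat{X}_b$ with the dual abelian variety of $X_b$ and invoking Mukai. But the whole point of the corollary is that it is asserted for \emph{every} closed point $b\in B$, including those in the discriminant locus $\Sigma(p)$. Over such $b$ the fibre $X_b$ is not an abelian variety, $\widehat{X}_b$ is only described as a fibre of the moduli stack $X^\vee$, and the restricted kernel $\clE_b$ is not the classical Poincar\'e line bundle; so Mukai's theorem simply does not apply. Your base-change compatibility $L\hat\iota_b^{\,*}\circ\Phi\simeq\phi_b\circ L\iota_b^{\,*}$ is the right starting point, but it does not by itself force $\phi_b$ to be an equivalence: one still has to show that the unit and counit of the global adjunction, which are isomorphisms by Theorem~\ref{T1}, remain isomorphisms after derived restriction to each fibre. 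That step is not formal and is exactly what your proposal omits. (Your hypotheses are also too optimistic: the extended kernel $\clE$ is a sheaf, not assumed to be a line bundle, and flatness of $X^\vee/B$ over the singular locus is not part of the set-up.)

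The paper's proof avoids this difficulty by appealing to Proposition~2.15 of \cite{HLS}, a general result on relative integral functors for singular fibrations which says precisely that if $\phi_{\clE}^{X\to\widehat{X}}$ is an equivalence then so is each $\phi_b$, with no smoothness assumption on the fibres. So either cite that result, or supply the missing argument that the adjunction isomorphisms descend fibrewise; invoking Mukai only recovers the case $b\notin\Sigma(p)$, which was already contained in Proposition~\ref{prop1}.
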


\section{SYZ mirror conjecture}
There are two main mathematical conjectures in Mirror Symmetry,
Kontsevich homological mirror symmetry conjecture and the conjecture
of Strominger, Yau and Zaslow, which predicts the structure of a CY
manifold and how to get the mirror of a given CY manifold. We first
recall what is a Calabi-Yau manifold.

\begin{defi} A Calabi-Yau manifold $X$ of dimension $n$ is a smooth compact connected $n-$fold with vanishing first Betti number and trivial canonical class $$\Lambda^{n}\Omega_{X}\cong K_{X}\cong \mathcal{O}_{X}.$$
\end{defi}

Let $\pi: X\rightarrow S$ be a proper map, when all the fibers are equidimensional, we say that it is a fibration. We don't impose further assumptions on the base.
It is of interest from the point of view of Mirror Symmetry, the case in which the total fibration is Calabi-Yau. In this case, Strominger, Yau and Zaslow have conjectured how would it be the structure of the mirror fibration. Mirror dual Calabi-Yau manifolds should be fibred over the same base in such a way that generic fibres are dual tori, and each fibre of any of these two fibrations is a Lagrangian submanifold.

\begin{defi} Let $(X,w)$ be a holomorphic symplectic manifold (not necessarily compact) of dimension $2r$. A Lagrangian fibration is a proper map $h:X\rightarrow B$ onto a manifold $B$ such that the general fibre $F$ of $h$ is Lagrangian, that is, $F$ is connected, of dimension $r$, and the restriction $w|_{F}=0$ vanishes. This implies that the smooth fibres of $h$ are complex tori.
\end{defi}

\subsubsection*{SYZ mirror Conjecture.}
If $X$ and $\widetilde{X}$ are a mirror pair of CY $n-$folds, then there exists fibrations $f:X\rightarrow B$ and $\check{f}: \check{X}\rightarrow B$ whose fibres are special Lagrangian with general fibre an $n-$torus.
Furthermore, these fibrations are dual in the sense that canonically $X_{b}=H^{1}(\check{X}_{b}, \mathbb{R}/\mathbb{Z})$ and $\check{X}_{b}=H^{1}(X_{b}, \mathbb{R}/\mathbb{Z})$ whenever $X_{b}$ and $\check{X}_{b}$ are non singular tori. In particular, each of these fibrations admits a canonical section that is an $n-$cycle having intersection number 1 with the fibre cycle. One of the main problems of SYZ mirror conjecture is the presence of singular fibres on the fibration. We will study the moduli problem in this case.

\subsection{Abelian fibrations.}
Let $\Gamma\cong \mathbb{Z}^{d}$ be a lattice in a complex vector space $U$ of dimension $d$ and $\Gamma^{*}\subset U^{*}$
be the dual lattice. The complex torus $(U/\Gamma, I)$ where $I$ is the complex structure, is an abelian variety $A$ of dimension $d$ over $\mathbb{Z}$ if it is algebraic. Let $\widehat{A}$ be the dual abelian variety, i.e the dual torus $(U^{*}/\Gamma^{*}, -I^{t})$. There is a unique line bundle $P$ on the product $A\times \hat{A}$ such that for any point $\alpha\in \widehat{A}$, the restriction $P_{\alpha}$ on $A\times \{\alpha\}$ represents an element of $Pic^{0}(A)$ corresponding to $\alpha$ and in addition, the restriction $P|_{\{0\}\times \widehat{A}}$ is trivial. Such $P$ is called the Poincar\'e line bundle and gives an equivalence between the derived categories of sheaves on $A$ and $\widehat{A}$

\subsubsection{The moduli problem of the dual fibration}

Let  $p:X\rightarrow B$ be a fibration by  abelian varieties with a relative polarization or ample line bundle defined over the total fibration such that the restriction to each fibre is the polarization class on the corresponding fibre. The existence of a relative polarization for the fibration does not necessarily  imply the existence of a section.

\noindent In the fibration $X/B$ singular fibres can appear. In this case, except for some particular cases, we don't know how does the dual abelian variety look like. The idea is to replace the abelian variety by one that is derived equivalent to it.
We consider the moduli problem of the dual fibration, that is, the dual fibration as the stack representing 
the Picard functor, that is, the moduli functor of semistable
sheaves on the fibres that contains line bundles of degree 0 on
smooth fibres. The corresponding coarse moduli space is not a fine
moduli space due to the presence of singular fibres. Let us call
$X^{\vee}$ the dual fibration when it exists and satisfying the
property  that over the smooth locus the fibres correspond to the
dual abelian varieties of the original fibration.

\begin{con}
Two Calabi-Yau threefolds $C_{1}, C_{2}$ that are
fibred over the same base $\mathbb{P}^{1}$ in such a way that the
fibres are abelian surfaces and derived equivalent are derived
equivalent themselves. This prediction is according with SYZ mirror
symmetry conjecture.
\end{con}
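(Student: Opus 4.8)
The plan is to upgrade the fibrewise equivalences to a \emph{relative} Fourier--Mukai transform over $\mathbb{P}^{1}$, in the same spirit as Theorem~\ref{T1}. For each closed point $b$ the abelian surfaces $C_{1,b}$ and $C_{2,b}$ are derived equivalent, so by the classification of derived equivalences of abelian varieties (Mukai, Orlov, Polishchuk) the surface $C_{2,b}$ is isomorphic to a connected component of the moduli space of Gieseker-semistable sheaves on $C_{1,b}$ with a fixed Mukai vector $v_{b}$; when $C_{1,b}$ is principally polarized this component is simply the dual abelian surface $\widehat{C_{1,b}}$. The first step is to show that over the smooth locus $U\subset\mathbb{P}^{1}$ the vectors $v_{b}$ glue to a global relative Mukai vector and that the associated relative moduli space of semistable sheaves on $C_{1}/\mathbb{P}^{1}$ restricts over $U$ to the dual fibration $C_{1}^{\vee}|_{U}$ constructed above; Theorem~\ref{T1}, applied to a relative polarization, then produces the relative Poincar\'e kernel giving $D^{b}(C_{1}|_{U})\simeq D^{b}(C_{1}^{\vee}|_{U})$.

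The second step is to cross the singular fibres. One passes from $C_{1}^{\vee}|_{U}$ to the full relative moduli space $M\to\mathbb{P}^{1}$ of semistable sheaves with the chosen Mukai vector, which is projective over $\mathbb{P}^{1}$ because the fibres are proper; picking the component containing $C_{1}^{\vee}|_{U}$, one must identify $M$ with $C_{2}$. Since $M$ is a projective threefold, birational to $C_{2}$, with trivial canonical bundle and the same abelian surface fibration over $\mathbb{P}^{1}$, one argues as for elliptic Calabi--Yau threefolds that the minimal models coincide, or uses that $C_{2}$ is characterised among its birational models by carrying that fibration with the prescribed polarization. On $C_{1}\times_{\mathbb{P}^{1}}M$ there is in general only a \emph{twisted} universal sheaf $\mathcal{P}$, for a Brauer class $\alpha\in\mathrm{Br}(M)$ --- this is exactly the failure of the coarse moduli space to be fine caused by the singular fibres --- so a priori one obtains only an equivalence $D^{b}(C_{1})\simeq D^{b}(M,\alpha)$.

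The third and decisive step is to show $\alpha=0$. The class $\alpha$ restricts to $0$ on every fibre over $U$, where the dual abelian surface is a fine moduli space, so in the Leray spectral sequence for $M\to\mathbb{P}^{1}$ the class $\alpha$ comes from $\mathrm{Br}(\mathbb{P}^{1})=0$ up to a contribution supported on the finitely many singular fibres; this residual contribution should be killed by a relatively very ample line bundle on $M/\mathbb{P}^{1}$, a multisection trivialising the obstruction as in the standard argument. With $\alpha=0$ one has a genuine relative kernel $\mathcal{P}\in D^{b}(C_{1}\times_{\mathbb{P}^{1}}C_{2})$, and it remains to check that $\Phi_{\mathcal{P}}\colon D^{b}(C_{1})\to D^{b}(C_{2})$ is an equivalence. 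By the relative form of Bridgeland's criterion used by Bridgeland--Maciocia for abelian and elliptic fibrations it suffices that $\mathcal{P}_{b}$ induce an equivalence $D^{b}(C_{1,b})\to D^{b}(C_{2,b})$ for each $b$ --- which is the hypothesis, singular fibres included --- together with boundedness of $\Phi_{\mathcal{P}}$ and the relative Serre (Calabi--Yau) condition, which holds since $K_{C_{1}}$ and $K_{C_{2}}$ are trivial; this gives $D^{b}(C_{1})\simeq D^{b}(C_{2})$.

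I expect the main obstacle to be the third step together with the gluing in the first: making precise what ``derived equivalent'' means fibre by fibre over the singular points, checking that the fibrewise kernels fit into a single bounded complex on $C_{1}\times_{\mathbb{P}^{1}}C_{2}$ that is suitably flat over the discriminant, and controlling the Brauer obstruction there. The behaviour of the universal family over the degenerate abelian surfaces, and whether one must enlarge the target to a twisted derived category or can genuinely remove the twist, is the technical heart of the matter.
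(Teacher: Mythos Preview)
The statement you are addressing is labelled in the paper as a \emph{Conjecture}, and the paper does \emph{not} offer a proof of it. What the paper actually proves (Theorem~\ref{T1}) is only the special case in which $C_{2}$ is the \emph{dual} fibration $C_{1}^{\vee}$ of $C_{1}$, constructed via the relative Picard functor; there the kernel is the relative Poincar\'e sheaf, and the argument runs through an \'etale Galois base change to acquire a section, followed by Galois descent. The general statement --- that any two abelian-surface-fibred Calabi--Yau threefolds over $\mathbb{P}^{1}$ with fibrewise derived equivalent fibres are globally derived equivalent --- is left open in the paper.

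Your outline is therefore not comparable to any proof in the paper; it is a strategy for an open problem. As such it follows the natural template (relative moduli, twisted universal sheaf, Brauer obstruction, Bridgeland--Maciocia-type criterion), and you have honestly flagged the genuine gaps. To name them concretely: there is no a priori reason the fibrewise Mukai vectors $v_{b}$ are locally constant and glue over $\mathbb{P}^{1}$; the identification of the relative moduli space $M$ with $C_{2}$ via ``minimal models coincide'' is not automatic for abelian-fibred threefolds and requires a real argument; the assumption that the \emph{singular} fibres are derived equivalent via the restriction of a single kernel is much stronger than what the bare statement gives you; and the vanishing of the Brauer class does not follow from $\mathrm{Br}(\mathbb{P}^{1})=0$ alone --- in the Leray spectral sequence the relevant piece lives in $H^{1}(\mathbb{P}^{1},R^{1}\pi_{*}\mathbb{G}_{m})$, which need not vanish. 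These are precisely the obstructions that keep the statement a conjecture.
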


First we fix our attention on the smooth locus. Let
$\Sigma(p) \hookrightarrow B$ be the discriminant locus of
$p$, that is, the closed subvariety in the parameter space $B$
corresponding to the singular fibres.
\begin{equation}\label{diagram}\begin{array}{cccc}

 X^{\vee}& \supset X^{\vee}- p^{-1}(\Sigma(p)) & \hookrightarrow & \mathbb{P}^{N} \\
 \downarrow &   & & \\
B &  \supset  B-\Sigma(p) & & \\
\end{array}\end{equation}

Then, we can take  the Zariski closure of
$X^{\vee}-p^{-1}(\Sigma(p))$ in $\mathbb{P}^{N}$. For each $b
\in B-\Sigma(p)$, the corresponding derived equivalence of the
fibres $X_{b}$ and $X^{\vee}_{b}$ is given by the Poincare bundle
$\mathcal{P}_{b}$ over the product $X_{b}\times X^{\vee}_{b}$.
Moreover, its first Chern class $c_{1}(\mathcal{P}_{b})$ lives in
$H^{1,1}(X_{b}\times X^{\vee}_{b},\mathbb{Z})\cap
H^{2}(X_{b}\times X^{\vee}_{b},\mathbb{Z})$. The monodromy group is defined by the action
of the fundamental group of the complement of the discriminant locus $\pi_{1}(B-\Sigma(p))$ on the
cohomology $H^{*}(X_{b}\times X^{\vee}_{b},\mathbb{Z})$ of a fixed non-singular fiber,  and since the class of the polarization is invariant by the monodromy, by Deligne
theorem we can extend the class of the Poincare bundle
 to the non singular fibres, it is the relative Poincare
sheaf of the fibred product of the two families over the base $B$
and we will call it $\mathcal{E}$. In particular, there is a relative polarization and thus we can assume that the fibration is a projective morphism. In the case the fibres are of dimension one, this equivalent to the existence of a multisection. So we will assume the existence of a multisection which means we have a smooth morphism and after an \`etale base change, we get a family of abelian polarized varieties admitting a global section which outside the discriminant locus coincides with the given one.

\begin{prop}\label{prop1}

The Fourier-Mukai transform $$\phi_{\mathcal{E}}: D^{b}(X|_{B-\Sigma\,(p)})\rightarrow D^{b}(X^{\vee}|_{B-\Sigma\,(p)})$$
with kernel $\mathcal{E}$,  is an equivalence of derived categories over the smooth locus. 

\end{prop}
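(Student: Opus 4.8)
\begin{proo}
The plan is to treat this as the relative form of Mukai's theorem that the Poincar\'e bundle induces an equivalence $D^{b}(A)\isomto D^{b}(\widehat{A})$ for a single abelian variety. After the \'etale base change discussed above we may assume that $q\colon X|_{B-\Sigma(p)}\to B-\Sigma(p)$ is a polarized abelian scheme of relative dimension $g$ with zero section, that $q^{\vee}\colon X^{\vee}|_{B-\Sigma(p)}\to B-\Sigma(p)$ is its dual abelian scheme, and that $\mathcal{E}$ is the relative Poincar\'e sheaf on $X|_{B-\Sigma(p)}\times_{B-\Sigma(p)}X^{\vee}|_{B-\Sigma(p)}$, a line bundle flat over each factor. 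Since $\mathcal{E}$ is a sheaf on a scheme proper, flat and of finite Tor-dimension over both factors, the integral functor $\phi_{\mathcal{E}}$ carries $D^{b}$ into $D^{b}$ and the statement makes sense; this is the first point to check.

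I would then construct a quasi-inverse by convolution of kernels. Let $\mathcal{E}'$ denote the Poincar\'e sheaf of the dual family, viewed as a kernel on $X^{\vee}|_{B-\Sigma(p)}\times_{B-\Sigma(p)}X|_{B-\Sigma(p)}$, with associated functor $\phi_{\mathcal{E}'}$. The core step is the computation of the relative convolution $\mathcal{E}'\ast\mathcal{E}$ on $X|_{B-\Sigma(p)}\times_{B-\Sigma(p)}X|_{B-\Sigma(p)}$: using flat base change along the projections of the triple fibre product together with the projection formula, this reduces to the relative form of Mukai's index computation, i.e.\ that the derived pushforward of $\mathcal{E}$ along the projection onto one copy of $X|_{B-\Sigma(p)}$ is the structure sheaf of the zero section, placed in cohomological degree $g$. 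Granting this one obtains $\phi_{\mathcal{E}'}\circ\phi_{\mathcal{E}}\cong(-1)^{*}[-g]$ as an endofunctor of $D^{b}(X|_{B-\Sigma(p)})$, where $(-1)$ is the inversion of the abelian scheme, and symmetrically $\phi_{\mathcal{E}}\circ\phi_{\mathcal{E}'}\cong(-1)^{*}[-g]$; since $(-1)^{*}[-g]$ is an autoequivalence, $\phi_{\mathcal{E}}$ must be an equivalence.

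The main obstacle will be precisely this base-change step. The cohomology of the Poincar\'e bundle along the fibres is not locally free: on a fibre $X_{b}\times X^{\vee}_{b}$ the restriction of $\mathcal{P}_{b}=\mathcal{E}|_{X_{b}\times X^{\vee}_{b}}$ to $X_{b}\times\{\alpha\}$ has vanishing cohomology for $\alpha\neq 0$, while over $\alpha=0$ it contributes all of $\Lambda^{\bullet}H^{1}(X_{b},\mathcal{O}_{X_{b}})$, of total dimension $2^{g}$, so cohomology-and-base-change cannot be applied fibre by fibre. The way around this is Mukai's: the object $Rq^{\vee}_{*}\mathcal{E}$ is rigid and its formation commutes with arbitrary base change --- this is the WIT property of the Poincar\'e sheaf for the projection to the dual abelian scheme, namely the vanishing of all but one of the relative cohomology sheaves --- so the computation is compatible with restriction to each geometric fibre, where it reduces to Mukai's lemma that the total cohomology of the Poincar\'e bundle of an abelian variety is $k(0)[-g]$. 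Alternatively one may quote the relative Mukai equivalence for abelian schemes directly from the literature.

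A cleaner variant would be to check the equivalence fibrewise. The kernel $\mathcal{E}$ is flat over $B-\Sigma(p)$ and restricts over each $b\in B-\Sigma(p)$ to the Poincar\'e bundle $\mathcal{P}_{b}$ on $X_{b}\times X^{\vee}_{b}$, which by Mukai's theorem gives an equivalence $\phi_{\mathcal{P}_{b}}\colon D^{b}(X_{b})\isomto D^{b}(X^{\vee}_{b})$. By the standard criterion for relative integral functors between smooth projective morphisms with flat kernel, fibrewise full faithfulness and essential surjectivity force $\phi_{\mathcal{E}}$ to be an equivalence between $D^{b}(X|_{B-\Sigma(p)})$ and $D^{b}(X^{\vee}|_{B-\Sigma(p)})$. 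Even along this route the delicate point is the compatibility of $\phi_{\mathcal{E}}$ with base change to the fibres, again governed by the WIT property of $\mathcal{E}$.
\end{proo}
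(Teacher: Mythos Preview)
Your proposal is correct and follows essentially the same path as the paper: both identify the restriction to $B-\Sigma(p)$ as an abelian scheme with section and dual abelian scheme, and then invoke Mukai's relative equivalence via the Poincar\'e sheaf. The paper's own argument is considerably terser --- it sets up the abelian-scheme structure, appeals to Deligne for the section, notes that $\mathcal{E}$ restricts fibrewise to the Poincar\'e bundle, and then simply asserts the equivalence --- whereas you go further and actually sketch the proof of the relative Mukai equivalence itself (convolution of kernels, the $(-1)^{*}[-g]$ identity, the WIT/base-change issue, and the alternative fibrewise criterion), none of which the paper spells out.
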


\begin{proof} Let $\widetilde{B}\subset B$ be the open subset supporting the smooth fibres of $p$, and let $X^{sm}:=X|_{B-\Sigma\,(p)}$ be the fibration restricted to the smooth locus $\widetilde{B}:=B-\Sigma\,(p)$. The family has
the structure of a scheme, these are the abelian schemes that can be seen as schemes in groups and have been already studied by Mukai. Due to Deligne (\cite{Del}), there exists a relative polarization meeting transversely any of the irreducible components of any fibre, so there is a global section $\sigma: \widetilde{B}\rightarrow X^{sm}$ obtained by associating to a point $t\in B$, the point $[\mathcal{O}_{X_{t}}]$ corresponding to the semistable sheaf $\mathcal{O}_{X_{t}}$ on the fibre $X_{t}$. 
The Picard functor is representable by the dual fibration $\widehat{X}^{sm}:= X^{\vee}|_{B-\Sigma\,(p)}$.
The dual fibration $\widehat{X}^{sm}$ is defined in such a way that the fibres correspond to the dual abelian varieties of the original fibration, that is, if $\mathcal{E}$ is the relative Poincar\'e sheaf, then $\forall b\in B$, $\mathcal{P}_{b}$ is the Poincar\'e bundle over $X_{b}\times \hat{X}_{b}$. 
There is a natural polarization by considering the product $\pi_{1}^{*}\mathcal{O}_{X^{sm}}(\Theta)\otimes \pi_{2}^{*}\mathcal{O}_{\widehat{X}^{sm}}(\Theta)$, where $\Theta:=\sigma(\widetilde{B})$ and $\pi_{1}$, $\pi_{2}$ are the projection maps of $X^{sm}\times_{B} \widehat{X}^{sm}$ over the first and second components.
The fibres $X_{b}$ and $\hat{X}_{b}$ are derived equivalent, and the equivalence is given by the Poincar\'e bundle over the product $X_{b}\times \hat{X}_{b}$.

Then the Fourier-Mukai transform $\phi_{\mathcal{E}}: D^{b}(X^{sm})\rightarrow D^{b}(\widehat{X}^{sm})$ with kernel $\mathcal{E}$,
\noindent defines an equivalence of the corresponding derived categories. Thus the abelian schemes $X^{sm}$ and $\widehat{X}^{sm}$ are derived equivalent and the equivalence is given by the FMT with kernel the relative Poincar\'e sheaf, $$\phi_{\mathcal{E}}(L)=\mathbf{ R}\pi_{2*}(\pi_{1}^{*}L\otimes \mathcal{E}),$$

\end{proof}

\begin{defi} The dual fibration $(X^{\vee}/B)$ is defined as the moduli stack representing the extended Poincar\'e sheaf $\mathcal{E}$.
\end{defi}

 If $\mathcal{J}$ is the relative moduli functor, and
$Pic^{0}(X/B)$, the relative Jacobian, that is, the variety
$X^{\vee}$ representing the relative moduli functor, the
relative Poincar\'e sheaf $\mathcal{E}$ is the family representing
an element of $\mathcal{J}(Pic^{0}(X/B))$ such that for each
variety $S$ and each $\mathcal{F}\in \mathcal{J}(S)$ there exists
a unique morphism $f:S\rightarrow X^{\vee}$ satisfying that
$\mathcal{F}\cong f^{*}\mathcal{E}$. Therefore $\mathcal{E}$
induces a natural transformation $\Phi: \mathcal{J}\rightarrow
Hom(-,X/B)$ 
giving a stack structure $((X/B),\Phi)$.

It is universal in the sense that for every other variety $N$ and every
natural transformation $$\chi: Hom(-,N) \rightarrow Hom(-,X^{\vee}), $$
the following diagram commutes:

$$\xymatrix{\mathcal{J} \ar[rr]^{\Phi} \ar[dr] & & Hom(-,N)\ar[dl] \\   &   Hom(-,X^{\vee})  } $$

\begin{theo} \label{T1}The extended relative Poincar\'e  sheaf $\clE$ to the total fibration induces a derived equivalence between $X/B$ and $X^{\vee}/B$.
\end{theo}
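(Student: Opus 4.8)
The plan is to realize $\clE$ as the kernel of a relative integral functor
$$\phi_{\clE}\colon D^{b}(X/B)\longrightarrow D^{b}(X^{\vee}/B),\qquad \phi_{\clE}(-)=\mathbf{R}\pi_{2*}\big(\pi_{1}^{*}(-)\otimes\clE\big),$$
$\pi_{1},\pi_{2}$ being the projections of $X\times_{B}X^{\vee}$, and to prove it is an equivalence by reduction to the fibrewise situation. First I would note that, the polarization class being invariant under the monodromy, the Deligne extension $\clE$ is a coherent sheaf on $X\times_{B}X^{\vee}$ flat over $B$; since $X$ and $X^{\vee}$ are proper over $B$, cohomology and base change then give $Li_{b}^{*}\circ\phi_{\clE}\cong\phi_{\clE_{b}}\circ Li_{b}^{*}$ for every closed point $b\in B$, where $\clE_{b}:=\clE|_{X_{b}\times X^{\vee}_{b}}$. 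The skyscrapers of the closed points of $X$ span $D^{b}(X/B)$, and the canonical bundles of $X$ and $X^{\vee}$ are trivial because the total spaces are Calabi--Yau; hence, by the relative form of Bridgeland's equivalence criterion, it is enough to show that each $\phi_{\clE_{b}}\colon D^{b}(X_{b})\to D^{b}(X^{\vee}_{b})$ is an equivalence. Equivalently, and perhaps more transparently, I would compute the convolution $\clE^{\vee}*_{X^{\vee}}\clE$ on $X\times_{B}X$ and check it is $\clO_{\Delta}$ up to shift and twist: over $B-\Sigma(p)$ this holds by Proposition~\ref{prop1}, and being an equality of coherent sheaves it can be tested fibre by fibre over $\Sigma(p)$.

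Over the smooth locus there is nothing to do: for $b\in B-\Sigma(p)$ the fibre $X_{b}$ is an abelian variety, $X^{\vee}_{b}$ its dual, $\clE_{b}$ the Poincar\'e line bundle, and $\phi_{\clE_{b}}$ is an equivalence by Mukai's theorem, exactly as in Proposition~\ref{prop1}. All the content is thus concentrated in the fibres over the discriminant. For $b\in\Sigma(p)$, by the definition of $X^{\vee}$ the fibre $X^{\vee}_{b}$ is the moduli space of semistable sheaves with the prescribed Hilbert polynomial on $X_{b}$ --- a compactified Jacobian --- and $\clE_{b}$ is the corresponding universal sheaf; I then need this compactified Jacobian to be autodual, i.e. the restriction of the Poincar\'e kernel to induce a derived equivalence $D^{b}(X_{b})\simeq D^{b}(X^{\vee}_{b})$. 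When the fibres have dimension one this is classical: an integral curve of arithmetic genus one is isomorphic to its own compactified Jacobian and the maximal Cohen--Macaulay extension of the Poincar\'e sheaf realizes the equivalence, as in the analysis of Weierstra\ss\ elliptic fibrations. In higher dimension --- the abelian-surface case recorded in the Conjecture above --- I would invoke the autoduality of compactified Jacobians of integral varieties with planar singularities, whose kernel is again the restriction of $\clE$. Granting these fibrewise statements, $\phi_{\clE_{b}}$ is an equivalence for all $b$, so $\phi_{\clE}$ is an equivalence; restriction to a closed fibre also gives the stated corollary.

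The one genuinely hard point is the last step: the autoduality at the singular fibres. The Deligne extension supplies a global kernel $\clE$ and guarantees that it restricts to the honest Poincar\'e bundle over the smooth locus, but nothing formal forces the fibrewise transform to stay an equivalence once the fibre degenerates --- this must be imported as a separate input about compactified Jacobians, which in the higher-dimensional case is quite deep and is exactly where one uses that the fibres are abelian, or degenerate with only mild (planar) singularities, rather than arbitrary.
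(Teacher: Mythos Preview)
Your approach is genuinely different from the paper's. You work fibrewise: set up the relative integral functor, invoke base change and a Bridgeland-type criterion, and then reduce everything to the statement that $\phi_{\clE_b}$ is an equivalence for each $b$, which you grant over $B-\Sigma(p)$ by Mukai and over $\Sigma(p)$ by importing autoduality of compactified Jacobians. The paper instead avoids analysing the singular fibres directly: it uses the assumed multisection to pass to a finite \'etale Galois cover $\tau:B'\to B$ with group $G$ over which the pullback $X'=X\times_B B'\to B'$ acquires an honest section; on $X'/B'$ the relative Poincar\'e sheaf exists by the argument of Proposition~\ref{prop1} and gives $D^b(X'/B')\simeq D^b(\widehat{X}'/B')$; finally one takes $G$-invariants and descends to $D^b(X/B)\simeq D^b(\widehat{X}/B)$. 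So the paper's key manoeuvre is \emph{\'etale base change plus Galois descent}, not a fibrewise Bridgeland criterion.

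What each buys: your route is conceptually clean and makes explicit exactly where the difficulty lies --- you correctly isolate the autoduality at singular fibres as ``the one genuinely hard point'' --- but that point is a serious external input (Arinkin-type results for planar singularities, or their abelian-surface analogues), and it is not available in the generality the paper claims. The paper's route trades that analytic difficulty for an algebraic one: after base change one has a section and hence a genuine abelian scheme structure on the smooth part, and the descent step is formal Galois theory; the price is that the equivalence on $X'/B'$ over the singular fibres is still asserted rather than proved in detail (the paper appeals to the stack structure of the relative moduli functor). In short, you and the paper agree over the smooth locus and diverge in how to globalise: you push the burden onto compactified Jacobians, the paper pushes it onto \'etale descent. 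If you want to align with the paper, replace your fibrewise analysis over $\Sigma(p)$ by the base-change-and-descent argument; if you prefer your own line, be aware that the autoduality input you need is stronger than anything the paper invokes.
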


\begin{proof} 
Since we are assuming there is a multisection $m: B\rightarrow X$, the fibration is a smooth morphism and we can consider an \` etale finite covering $\tau: B'\rightarrow B$ of the base $B$, 
(that is, locally around a point $y\in B'$ and $x=\tau(y)$, $\tau$ is simply the function $\{z\in \mathbb{C}: |z|<1\}\rightarrow \{z\in \mathbb{C} | \, |z|<1\}$ given by $z\rightarrow z^{k}$, where $k$ is the multiplicity of $\tau$ at $y$). Moreover, we can assume that $\tau: B' \rightarrow B$ is a Galois covering with finite Galois group $G$, just we observe that any normal extension of fields admits a Galois extension. If $\mathcal{B},  \mathcal{B}'$ are the fields of meromorphic functions on $B$ and $B'$ respectively, $\tau^{*}: \mathcal{B}\rightarrow \mathcal{B}'$ is a Galois field extension of degree $k$, with Galois group $G$ (acting by pull-back on $\mathcal{B}'$).
$$\xymatrix{ X'=X\times_B B'  \ar[r]\ar[d]  &   X\ar[d]\ar[d]^{\pi}  \\   B' \ar@/^/[u]^{s}\ar[r] &   B }$$

Now the fibration we get $X'\rightarrow B'$ admits a section $s$ that is the pull-back $(\tau\circ m)^{*}$ of the multisection,
and therefore there is a relative Poincar\'e sheaf $\mathcal{E}$ as in the proof of
Proposition \ref{prop1}, that restricted on smooth fibres
$X^{'}_{b}\times \hat{X}'_{b}$, where $\hat{X}'_{b}$ is the
corresponding dual abelian variety, is just the Poincare bundle. We
are taking as dual fibration of $X'/B'$, the relative moduli space.
If $\mathcal{J}$ is the relative moduli functor, $\mathcal{J} : VAR
\rightarrow SETS $, of semistable sheaves of the fibers containing
line bundles of degree 0 on smooth fibres, over the smooth locus,
$\mathcal{J}$ is represented by the relative Jacobian
$Pic^{0}(X'/B')$, which is the dual fibration
$\widehat{X}'^{sm}/B'$.

Due to the presence of singular fibres, the corresponding coarse
moduli space is not a fine moduli space, but the stack maybe a FM
partner, using the relative Poincar\'e sheaf $\mathcal{E}$
as kernel of the FM transform, that is, the relative
Poincar\'e sheaf $\mathcal{E}$ is the family representing an
element of $\mathcal{J}(Pic^{0}(X'/B'))$ such that for each variety
$S$ and each object $\mathcal{F}\in \mathcal{J}(S)$ there exists a
unique morphism $f:S\rightarrow \widehat{X}'$ satisfying that
$\mathcal{F}\cong f^{*}\mathcal{E}$. 

Thus there is an equivalence of categories
$$D^{b}(X'/B')\cong D^{b}(\widehat{X}'/B'),$$
defined by the extended Poincar\'e sheaf $\clE$
 where
$\widehat{\rho}: \widehat{X}'\rightarrow B'$ is the dual abelian
fibration. Now the Galois group $G$ acts on bundles on the fibres, and since they are invariant under the Galois action, there is an equivalence between the respective invariant
subcategories $(D^{b}(X'/B'))^{G}\cong (D^{b}(\widehat{X}'/B'))^{G}$
by the action of the Galois group, therefore by the fundamental
theorem of Galois theory, there is an equivalence between the
original categories $D^{b}(X/B)\cong D^{b}(\widehat{X}/B)$.

\end{proof}

\begin{coro} There is an equivalence $\phi_{b}:D^{b}(X_{b})\rightarrow D^{b}(\widehat{X}_{b})$ for every closed point $b\in B$.
\end{coro}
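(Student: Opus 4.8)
\medskip
\noindent\textbf{Proof strategy.} The plan is to obtain the corollary from Theorem~\ref{T1} by restricting the global Fourier--Mukai equivalence to a single fibre, via the compatibility of integral functors with base change. Write $W:=X\times_{B}X^{\vee}$ with projections $q_{1},q_{2}$, and let $\mathcal{E}$ on $W$ be the extended relative Poincar\'e sheaf produced in the proofs of Proposition~\ref{prop1} and Theorem~\ref{T1}, so that $\Phi_{\mathcal{E}}=\mathbf{R}q_{2*}(q_{1}^{*}(-)\otimes\mathcal{E})\colon D^{b}(X)\to D^{b}(X^{\vee})$ is an equivalence. For a closed point $b\in B$ write $j_{b}\colon W_{b}=X_{b}\times\widehat{X}_{b}\hookrightarrow W$ for the inclusion of the fibre over $b$, set $\mathcal{E}_{b}:=Lj_{b}^{*}\mathcal{E}$, and define $\phi_{b}:=\Phi_{\mathcal{E}_{b}}=\mathbf{R}p_{2*}(p_{1}^{*}(-)\otimes\mathcal{E}_{b})\colon D^{b}(X_{b})\to D^{b}(\widehat{X}_{b})$, where $p_{1},p_{2}$ are the projections of $X_{b}\times\widehat{X}_{b}$.

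First I would record that $\mathcal{E}$ is flat over $B$, since it is by construction the relative semiuniversal Poincar\'e sheaf for the family $W\to B$ and $W\to B$ is flat and proper; consequently $\mathcal{E}_{b}$ is an ordinary sheaf, and writing $\iota_{b}\colon X_{b}\hookrightarrow X$ and $\widehat{\iota}_{b}\colon\widehat{X}_{b}\hookrightarrow X^{\vee}$, the base-change isomorphism $L\widehat{\iota}_{b}^{*}\circ\Phi_{\mathcal{E}}\cong\Phi_{\mathcal{E}_{b}}\circ L\iota_{b}^{*}$ holds with no correction terms. In particular $\Phi_{\mathcal{E}}$ is $B$-linear, and $\phi_{b}$ is the fibrewise avatar of $\Phi_{\mathcal{E}}$, which is the compatibility implicitly asserted in the statement.

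It then remains to check that $\phi_{b}$ is an equivalence for every $b$. For $b\in B-\Sigma(p)$ this is immediate: $\mathcal{E}_{b}$ is the genuine Poincar\'e line bundle $\mathcal{P}_{b}$ on $X_{b}\times\widehat{X}_{b}$, and Mukai's theorem applies verbatim. For $b\in\Sigma(p)$, where $X_{b}$ is a degeneration of an abelian variety and $\widehat{X}_{b}$ is the associated moduli stack of semistable sheaves, I would instead use the relative form of the Bondal--Orlov / Bridgeland fully-faithfulness criterion: one verifies that the sheaves $\mathcal{E}_{b,s}$ parametrised by the points $s$ of $\widehat{X}_{b}$ form a strongly simple family, namely $\operatorname{Hom}(\mathcal{E}_{b,s},\mathcal{E}_{b,s})$ is one-dimensional and $\operatorname{Ext}^{\bullet}(\mathcal{E}_{b,s},\mathcal{E}_{b,t})=0$ for $s\neq t$. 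This is exactly the defining property of $\widehat{X}_{b}$ as the moduli stack of such sheaves, and the monodromy-invariance argument used to extend $\mathcal{E}$ across $\Sigma(p)$ guarantees the extended kernel retains it; full faithfulness together with the existence of the left and right adjoints $\Phi_{\mathcal{E}_{b}^{\vee}[\dim X_{b}]}$, which are of the same type and hence fully faithful by the symmetric argument, forces $\phi_{b}$ to be an equivalence.

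The step I expect to be the genuine obstacle is the case $b\in\Sigma(p)$: checking strong simplicity on a possibly non-reduced, reducible fibre, and controlling the derived restriction $Lj_{b}^{*}\mathcal{E}$ there. In relative dimension one this reduces to known facts about compactified Jacobians of integral, and more generally reducible nodal, curves; in higher dimension it either needs a restriction to mild degenerations --- such as those occurring for principally polarised abelian surfaces over $\mathbb{P}^{1}$, as in the Conjecture stated above --- or an independent verification that the relative kernel stays a good kernel over the discriminant.
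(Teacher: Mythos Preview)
The paper's proof is a two-liner: Theorem~\ref{T1} furnishes a global relative Fourier--Mukai equivalence $\Phi_{\mathcal{E}}\colon D^{b}(X)\to D^{b}(\widehat{X})$, and then Proposition~2.15 of \cite{HLS} is invoked, which states (in the relevant direction) that a relative integral functor over $B$ which is an equivalence is automatically an equivalence on every fibre. No case distinction between smooth and singular fibres is made.

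Your route is different and, at the singular fibres, incomplete. You correctly set up the base-change compatibility $L\widehat{\iota}_{b}^{*}\circ\Phi_{\mathcal{E}}\cong\phi_{b}\circ L\iota_{b}^{*}$, which is essentially the input to the \cite{HLS} result; but you then discard the leverage of the global equivalence and try to establish $\phi_{b}$ is an equivalence \emph{ab initio}, via Mukai on the smooth locus and a strong-simplicity check on $\Sigma(p)$. As you yourself acknowledge, the latter is not carried out, and controlling $Lj_{b}^{*}\mathcal{E}$ and the $\operatorname{Ext}$-orthogonality on a degenerate fibre is genuinely hard.

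The step you are missing is that this detour is unnecessary. Once $\Phi_{\mathcal{E}}$ is an equivalence, its quasi-inverse is again a relative integral functor (with kernel $\mathcal{E}^{\vee}$ up to shift and twist), and the unit and counit of the adjunction are isomorphisms globally; applying the very base-change isomorphism you wrote down to both $\Phi_{\mathcal{E}}$ and its adjoint shows that the unit and counit are isomorphisms after restriction to each fibre, so $\phi_{b}$ is an equivalence for every $b$, singular or not. This is precisely what Proposition~2.15 of \cite{HLS} packages, and it is what the paper cites.
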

\begin{proof}
By Theorem \ref{T1}, the integral functor $\phi^{X\rightarrow \widehat{X}}_{\mathcal{E}}: D^{b}(X)\rightarrow D^{b}(\widehat{X})$ is an equivalence of derived categories, where
$\widehat{\rho}: \widehat{X}\rightarrow B$ is the dual abelian
fibration. It follows from Prop. 2.15 of  \cite{HLS} that there is fibrewise equivalence
$\phi_{b}:D^{b}(X_{b})\rightarrow D^{b}(\widehat{X}_{b})$.
\end{proof}

\subsubsection*{Acknowledgments} I would like to thank Yuri Manin who first suggested me the problem of studying the derived categories of Calabi-Yau manifolds and Andrey Todorov for very helpful conversations and comments.
 My work has been partially supported by the project MTM2009-10359

\end{document}